\newtheorem{theorem}{Theorem}[section]
\newtheorem{prop}[theorem]{Proposition}
\begin{document}

\title{Boundary Technology Costs for Economic Viability of Long-Duration Energy Storage Systems }

\author{ 
Patricia Silva, \IEEEmembership{Student Member,~IEEE,}
Alexandre Moreira, \IEEEmembership{Member,~IEEE,} Miguel Heleno, \IEEEmembership{Senior Member,~IEEE,} and Andr\'e~Lu\'is~Marques~Marcato,~\IEEEmembership{Senior Member,~IEEE}

}

\maketitle
\begin{abstract} The urgent need for decarbonization in the energy sector has led to an increased emphasis on the integration of renewable energy sources, such as wind and solar, into power grids. While these resources offer significant environmental benefits, they also introduce challenges related to intermittency and variability. Long-duration energy storage (LDES) technologies have emerged as a very promising solution to address these challenges by storing excess energy during periods of high generation and delivering it when demand is high or renewable resources are scarce for a sustained amount of time. This paper introduces a novel methodology for estimating the boundary technology cost of LDES systems for economic viability in decarbonized energy systems. Our methodology is applied to estimate the boundary costs in 2050 for the state of California to achieve full retirement of gas power plants. California's ambitious decarbonization goals and transition to a renewable energy-based power system present an ideal context for examining the role of LDES. The results also offer insights into the needed capacity expansion planning and the operational contribution of LDES in the California's energy landscape, taking into account the unique energy demand profiles and renewable resource availability of the region. Our findings are intended to provide complementary information to guide decision-makers, energy planners, and any other stakeholders in making informed choices about LDES investment in the context of a decarbonized energy future.
\end{abstract}

\begin{IEEEkeywords}
    Power systems planning and economics, long-duration storage systems, valuation of emergent technology.
\end{IEEEkeywords}

\section*{Nomenclature}\label{Nomenclature}

\subsection*{Sets}

\begin{description} [\IEEEsetlabelwidth{100000000}\IEEEusemathlabelsep]
	
    \item[$G$] Set of indexes of all generators.

    \item[$G^{cand}$] Set of indexes of generators that are candidate for investment.

    \item[$G^{fixed}$] Set of indexes of fixed existing generators.

    \item[$G^{firm}$] Set of indexes of generators able to provide firm dispatchable generation.

    \item[$G^{renew}$] Set of indexes of renewable generators.

    \item[$G^{firm,fixed}$] Set of indexes of generators equivalent to $G^{firm} \cap G^{fixed}$.

    \item[$G^{renew,fixed}$] Set of indexes of generators equivalent to $G^{renew} \cap G^{fixed}$.

    \item[$G^{renew,cand}$] Set of indexes of generators equivalent to $G^{renew} \cap G^{cand}$.

    \item[$G{\color{black}^{res,providers}}$] Set of indexes of generators that can provide reserve.

    \item[$H$] Set of indexes of all energy storage systems.

    \item[$H^{cand}$] Set of indexes of storage systems that are candidate for investment.

    \item[$H^{fixed}$] Set of indexes of fixed existing storage systems.

    \item[$H^{long}$] Set of indexes of long-duration storage systems.

    \item[$H^{short}$] Set of indexes of short-duration storage systems.

    \item[$H^{short,fixed}$] Set of indexes of storage systems equivalent to $H^{short} \cap H^{fixed}$.

    \item[$H^{long,cand}$] Set of indexes of storage systems equivalent to $H^{long} \cap H^{cand}$.

    \item[$H^{short,cand}$] Set of indexes of storage systems equivalent to $H^{short} \cap H^{cand}$.

    \item[$T$] Set of indexes of time periods.

\end{description}

\subsection*{Parameters}
\begin{description} [\IEEEsetlabelwidth{100000000}\IEEEusemathlabelsep]

    \item[$\eta_{h}$] Round trip efficiency of storage system $h$.
    
    \item[$C^{I}$] System power imbalance cost.

    \item[${\color{black}C^{short}}$] Reserve shortage cost.

    \item[$C^{inv,gen}_g$] Equivalent annual investment cost of candidate generator $g$.

    \item[$C^{fom,gen}_{g}$] Annual fixed operation and maintenance cost of generator $g$.

    \item[$C^{fom,st,power}_{h}$] Annual fixed operation and maintenance cost of storage system $h$.

    \item[$C^{p}_{gt}$] Generation cost of generator $g$.
    
    \item[$C^{st,energy}_h$] Equivalent annual investment cost in energy capacity for storage system $h$.

    \item[$C^{st,power}_h$] Equivalent annual investment cost in power capacity for storage system $h$.

    \item[$C^{up}_{gt}$] Reserve provision cost of generator $g$.

    \item[$f^{available}_{gt}$] Number between $0$ and $1$ that determines how much of the generation capacity of renewable unit $g$ is available during time $t$.

    \item [${\color{black}D_{t}}$] Demand of the system at time period $t$.
    
    \item[$\overline{P}_g$] Power generation capacity of generator $g$.

    \item[$\overline{P}^{st,power}_h$] Maximum power charge/discharge limit for existing storage system $h$.

    \item[$\overline{P}^{st,power,ini}_h$] Maximum initial power charge/discharge limit for candidate storage system $h$.

    \item[$\overline{R}^{up,factor}_g$] Number between $0$ and $1$ that determines how much of the generation capacity of unit $g$ can be used for reserves.

    \item[$r^{up,min}_{t}$] Minimum amount of reserve to be held by the system.

    \item[${RD}^{factor}_g$] Number between $0$ and $1$ that determines the ramp-down capability of unit $g$ relative to its generation capacity.

    \item[${RU}^{factor}_g$] Number between $0$ and $1$ that determines the ramp-up capability of unit $g$ relative to its generation capacity.

    \item[$\overline{S}_h$] Duration of storage system $h$.

    \item[$\underline{V}_h$] Minimum state of charge limit for storage system $h$.

    \item[$\overline{V}_h$] Maximum state of charge limit for storage system $h$.

    \item[$\overline{x}^{inv,gen}_g$] Maximum limit of generation capacity investment for candidate generator $g$.

    \item[$\underline{x}^{ret,gen}_g$] Number between $0$ and $1$ that determines the minimum reduction in the generation capacity of unit $g$.

    \item[$\overline{x}^{ret,gen}_g$] Number between $0$ and $1$ that determines the maximum reduction in the generation capacity of unit $g$.

    \item[$\overline{x}^{st,energy}_h$] Maximum limit of energy capacity investment for candidate storage system $h$.

    \item[$\overline{x}^{st,power}_h$] Maximum limit of power capacity investment for candidate storage system $h$.

    \item[$x^{st,power \ddagger}_{h}$] Predefined power capacity for storage $h$ to be considered in the {\it opportunity value maximization model}.

\end{description}

\subsection*{Decision variables}

\begin{description} [\IEEEsetlabelwidth{5000000}\IEEEusemathlabelsep]

    \item[$\Delta^-_t$] Negative power imbalance during time $t$.
    
    \item[$\Delta^+_t$] Positive power imbalance during time $t$.
    
    \item[$\delta^{up,short}_t$] Reserve shortage during time $t$.
    
    \item[$c^{BC}$] Boundary cost of LDES.
    
    \item[$p_{gt}$] Power generation of unit $g$ during time $t$.
    
    \item[$p^{st,ch}_{ht}$] Power charge of storage $h$ during time $t$.
    
    \item[$p^{st,dis}_{ht}$]Power discharge of storage $h$ during time $t$.
    \item[$\overline{p}^{rem}_{g}$] Remaining generation capacity of unit $g$ after reduction.
    
    \item[$q^{over}$] Budget overrun relative to the overall cost determined by the baseline model.
    
    \item[$r^{st,up}_{ht}$] Reserve provisioned by storage $h$ during time $t$.
    
    \item[$r^{up}_{gt}$] Reserve provisioned by generator $g$ during time $t$.
    
    \item[$v_{ht}$] State of charge of storage $h$ during time $t$.
    
    \item[$x^{inv,gen \dagger}_{g}$] Generation capacity of generator $g$ after investment decision.
    
    \item[$x^{ret,gen \dagger}_{g}$] Generation capacity of generator $g$ to be reduced after retirement decision.
    
    \item[$x^{st,energy \dagger}_{h}$] Energy capacity of storage system $h$ after investment decision.
    
    \item[$x^{st,power \dagger}_{h}$] Power capacity of storage system $h$ after investment decision.

\end{description}

\section{Introduction}

\IEEEPARstart{C}{limate} concerns motivated a necessary global movement towards decarbonization, with countries all over the world pledging to reduce carbon emissions and reach net-zero emissions in the following years \cite{Net_zero_by_2050}. In the United States, the emissions of greenhouse gases (GHG) related to energy are expected to decrease by 25\% to 38\% from their 2005 levels by 2030, according to the projections in the Annual Energy Outlook (AEO) 2023  \cite{AEO_2023}. 
In California, the targets are even more aggressive according to the 100 Percent Clean Energy Act of 2018, also known as Senate Bill 100 (SB 100). The SB 100 essentially establishes that: (i) renewable sources will supply 60\% of California's total energy demand by 2030 and (ii) carbon neutrality must be achieved by 2045 \cite{SB_1000}. Within this context, 
long-duration energy storage (LDES) systems can play a vital role in achieving these objectives and it is of utmost importance to estimate the technology costs at which the service provided by LDES will become economically viable to support a cost-effective decarbonization of the energy sector.





The integration of renewable energy sources (RES), such as solar and wind, is the main measure to meet the aforementioned decarbonization targets. The variability and intermittency of RES, however, impose significant challenges to power systems, which have been originally designed to operate with firm and dispatchable resources, including, for example, hydro power plants, natural gas units, and coal generators \cite{dowling2020role}. The fluctuations in the effective availability of RES, nonetheless, can be absorbed and alleviated by energy storage (ES) systems, which, in general, enable the temporal shifting of energy. As a consequence, research about ES systems has been gaining traction with works discussing their participation in different exercises involving, for example, electricity markets \cite{zheng2023energy, he2023energy, zuluaga2023data} and transmission expansion planning \cite{qiu2016stochastic}. In addition, as a matter of fact, ES systems are being installed in their short-duration (usually around 4 hours) version in real-world power systems to facilitate operations under the current levels of renewable integration \cite{eia_battery_storage}.

The fundamental participation of short-duration energy storage (SDES) systems in counterbalancing the fluctuation of renewables notwithstanding, these systems are limited to contribute in their discharging mode during a short period of time within a day and cannot support the operation of a power system throughout a potential sustained amount of time when the generation output of RES might be substantially low. In fact, firm generation conventional technologies are still currently needed to provide this long-duration type of support. As we move towards fully decarbonized power systems, these firm technologies will need to be retired and an alternative to compensate renewable intermittence will be necessary to achieve 100\% renewable integration. This alternative can be LDES systems due to their potential instrumental role in providing power discharge over a prolonged amount of time. 
However, at the moment, LDES systems are non-mature technologies whose costs are not well defined yet, which hinders a proper evaluation of their contribution to achieving policy goals such as decarbonization. Therefore, to adequately inform policies and programs that aim to make LDES economically viable (such as the DOE's Long Duration Storage Shot \cite{DOE_LDES_shot}), there is a strong need for a systematic methodology that can appraise target LDES technology costs that will enable a cost-effective and beneficial adoption of LDES systems.


\vspace{-0.2cm}
\subsection{Literature review}

Currently, there is no consensus on the best technology to provide LDES services, with many different options to play this role under development \cite{Driving_to_Net_Zero_Industry_2023}, and it is unclear what the thresholds of costs and specifications should be for significant adoption  \cite{sepulveda2021design}. 
For instance, lithium-ion is currently the most popular storage technology and it is able to provide a sustained output over a long period of time \cite{sepulveda2021design}. However, lithium-ion has a high cost per kWh of energy storage capacity, which hinders its scalability for long-duration storage applications \cite{albertus2020long}. In addition, the ideal duration of an LDES system is still a matter of discussion. Most works consider that LDES has a minimum duration of 10 hours \cite{albertus2020long, zhang2020benefit, dowling2020role, Liftoff_DOE_2023}. The US Department of Energy (US-DOE), in its turn, splits LDES technologies according to the following duration categories: (i) inter-day LDES (10--36 hours), (ii) multi-day/week LDES (36--160 hours), (iii) and seasonal shifting (160+ hours) \cite{Liftoff_DOE_2023}. Within these different categories, several technologies of LDES are under development with diverse cost and performance characteristics. 

{\color{black}LDES technologies can be divided into four main types, namely, mechanical, chemical, electrochemical, and thermal \cite{Driving_to_Net_Zero_Industry_2023}. Some examples include: (i) mechanical (pumped hydroelectric storage (PHS) and compressed air energy storage (CAES)), (ii) electrochemical (zinc or vanadium flow batteries, lithium-ion, sodium, and iron-air batteries), (iii) thermal (concentrating solar power (CSP)), (iv) chemical (hydrogen storage). }
%
%
%
Furthermore, hybrid systems can also provide LDES services. For example, in \cite{vecchi2023long}, the authors explored six Thermo-Mechanical Energy Storage (TMES) technologies: adiabatic compressed air energy storage (ACAES), liquid air energy storage (LAES), pumped thermal energy storage (PTES), oxides energy storage (OES), carbonates energy storage (CES), and hydroxides energy storage (HES).

%
%
%
%

Due to its potential, the participation of LDES in capacity planning studies has been attracting an increasing deal of attention in the technical literature. For instance, in \cite{ziegler2019storage}, the authors optimize the mix of investments in wind and solar generators in Arizona, Iowa, Massachusetts, and Texas for different energy and power capacity costs of storage systems while also varying their duration and indicate US\$1000 $\text{kW}^{-1}$ and US\$20 $\text{kWh}^{-1}$ as competitive technology costs for LDES. By exploring the general LDES cost-performance parameter space using a discounted cash flow framework, the authors of \cite{albertus2020long} conclude that technology costs of energy capacity must be substantially reduced to US\$3 $\text{kWh}^{-1}$ for a 100h duration ES, and US\$7 $\text{kWh}^{-1}$ for a 50h duration ES, which would allow technology costs of power capacity to vary from US\$500 $\text{kW}^{-1}$ to US\$1000 $\text{kW}^{-1}$, assuming at least 50\% of RTE. According to the capacity expansion studies presented in \cite{hargreaves2020long}, ES systems with a duration ranging between 10 and 100 hours can become competitive at a marginal technology cost varying within US\$2.5--20 $\text{kWh}^{-1}$. {\color{black} Another capacity expansion study in \cite{sepulveda2021design} tested different values of technology costs and efficiencies for LDES and indicated 
maximum values of US\$20 $\text{kWh}^{-1}$ and US\$1400 $\text{kW}^{-1}$ as well as an RTE of 72\% to make LDES technologies able to reduce 
system costs by at least 10\% when nuclear is the only available firm generation technology.}

In general, the {\color{black}models developed and the} studies performed in \cite{sepulveda2021design, ziegler2019storage, albertus2020long, hargreaves2020long} evaluated the contribution of LDES to power systems while fixing different projections of technology costs in a fair attempt to answer the question: ``\textit{Given their projected costs, what is the value that LDES technologies can bring to a future system?}'', {\color{black}which assumes the cost of LDES is known}. Despite all the reported relevant findings, however, they do not provide a systematic methodology where the outcome directly informs {\color{black}boundary technology costs} of LDES for economic viability. 
{\color{black}Such systematic methodology would be particularly important for technologies that are still undergoing a major maturation process. Typically, the future (potentially competitive) cost of an non-mature technology is not yet defined. For instance, technology development policies and programs can bring dramatically down the costs of non-mature technologies in the course of a decade \cite{DOE_LDES_shot}.} Therefore, for non-mature technologies such as LDES, the valuation question is the opposite: ``\textit{How much does a technology need to cost to become economically viable to support a given policy target (e.g. decarbonization)?}''. In this work, we propose an approach to address the aforementioned question. Our framework is general enough to consider different policy targets and can be applied for the valuation of any non-mature technology to be included in power systems. Nonetheless, here we focus on computing the LDES boundary technology costs to support decarbonization.

\vspace{-0.2cm}
\subsection{Contributions}

In this paper, we propose a valuation methodology that estimates the boundary technology costs of LDES technologies for economic viability. We define this boundary as the technology cost below which the overall system costs (investment plus operations) will not exceed a reference value obtained when firm conventional (and already economically viable) generators perform the needed long-duration services instead of LDES. Our methodology combines the solution of two optimization problems and we use it to perform a realistic case study for the California's power system with high temporal resolution (8760h). Essentially, we estimate  the boundary technology costs of LDES technologies for economic viability in California for 2050 considering a reference energy matrix developed by NREL's Cambium 
\cite{gagnon2023cambium}.





The main contributions of this paper can be summarized as follows. 

\begin{enumerate}
    \item To propose a novel valuation methodology that computes the boundary technology cost of LDES systems {\color{black}below which these technologies become economically viable} {\color{black}based on the opportunity value maximization.} 
    \item To estimate the ideal technology costs to make 100-h LDES systems economically viable {\color{black}to support California's power system decarbonization goals.} 
    \item To improve the understanding of the operation of LDES systems as means of achieving a GHG emissions-free future.
\end{enumerate}

The remainder of this paper is organized as follows. Section \ref{sec:MathematicalFormulation} describes the mathematical formulations and our proposed framework. Section \ref{sec:CaseStudy} then delves into the case study conducted for the California's system considering the year of 2050.
This section discusses relevant financial and operational aspects related to LDES systems as candidate investment options. Finally, we draw our conclusions in Section \ref{sec:Conclusions}.


\vspace{-0.2cm}
\section{ Mathematical Formulation } \label{sec:MathematicalFormulation}

The main objective of the methodology proposed in this paper is to determine the boundary cost 
of LDES, which is the cost below which LDES becomes economically viable. Our methodology consists of a {\it baseline model} and an {\it opportunity value maximization model} which are solved in sequence as depicted in Fig. \ref{Fig:Methodology} to assess the maximum reduction in overall costs provided by a given quantity (energy and power) of LDES. We define this maximum reduction in overall costs as the maximum {\it opportunity value} of LDES, which has a direct relationship with its boundary cost. In the next subsections, we describe each of the aforementioned models in detail.




\begin{figure}[!tb]
    \centering
     \includegraphics[width=.45\textwidth,height=0.5\textheight,keepaspectratio]{./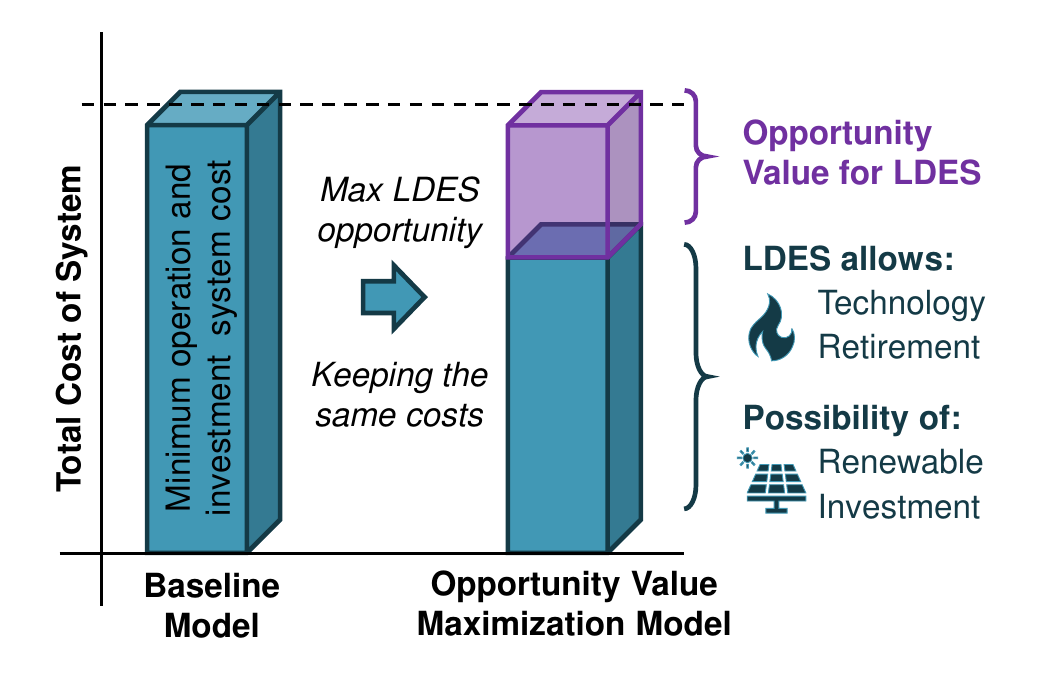}
    \caption{Methodology.}
    \vspace{-0.6cm}
    \label{Fig:Methodology}
\end{figure}

\vspace{-0.2cm}
\subsection{ Baseline model } \label{Baseline model}

As depicted in Fig. \ref{Fig:Methodology}, the {\it baseline model} provides a solution that minimizes the overall system costs. 
This optimization problem is solved without taking into account any policy related to enforcing the retirement generators of a particular technology. The main outcome of this model is the system overall cost which will be used as a benchmark to maximize the opportunity value of LDES. We describe the formulation of the {\it baseline model} as follows. 

\subsubsection{Objective function}

\begin{align}
    & q^* = \hspace{-5pt} \underset{{\substack{ \Delta^-_t, \Delta^+_t, \delta^{up,short}_t, p_{gt},\\ p^{st,ch}_{ht}, p^{st,dis}_{ht}, \overline{p}^{rem}_{g},\\ r^{st,up}_{ht}, r^{up}_{gt}, v_{ht},\\ x^{inv,gen \dagger}_{g}, x^{ret,gen \dagger}_{g},\\ x^{st,energy \dagger}_{g}, x^{st,power \dagger}_{g} 
    }}}{\text{Minimize}}  \hspace{0.1cm} \sum_{g\in G^{cand}} C^{inv,gen}_{g} x^{inv,gen \dagger}_{g} \notag \\
    & \hspace{5pt} + \sum_{h \in H^{short,cand}} \Bigl [ C^{st,energy}_{h} x^{st,energy \dagger}_{h} + C^{st,power}_{h} x^{st,power \dagger}_{h} \Bigr ]  \notag \\
    & \hspace{5pt} + \sum_{t \in T} \Biggl [ \sum_{g\in G} \Bigl [ C^p_{gt}p_{gt} + C^{up}_{gt} r^{up}_{gt} \Bigr] \notag \\
    & \hspace{5pt} + C^I \Bigl (\Delta^-_{t} + \Delta^+_{t} \Bigl ) + C^{short} \delta^{up,short}_{t}  \Biggr] \notag\\
    & + \sum_{g \in G^{firm,fixed}} C^{fom,gen}_{g} \overline{p}^{rem}_{g}  +  \sum_{g \in G^{renew,fixed}} C^{fom,gen}_{g} \overline{P}_{g} \notag\\
    & \hspace{5pt} + \sum_{g \in G^{cand}} C^{fom,gen}_{g} {x}^{inv,gen \dagger}_{g} \notag\\
    & \hspace{5pt} + \sum_{h \in H^{fixed}} C^{fom,st,power}_{h} \overline{P}^{st,power}_h \notag\\
    & \hspace{5pt} + \sum_{h \in H^{cand}} C^{fom,st,power}_{h} \Bigl ( \overline{P}^{st,power,ini}_h + x^{st,power \dagger}_{h}  \Bigr )  
    \label{BaselineModel_v1_1}
\end{align}

The objective function of the {\it baseline model} \eqref{BaselineModel_v1_1} minimizes expenditures on investments and operations as well as 
FO\&M costs. The investment expenditures are associated with new generation and short-duration storage systems, whereas operations expenditures are related to generation output, reserve provision, power imbalance, and reserve shortage. The FO\&M costs essentially apply for all existing and newly included generation and storage capacity.

\subsubsection{Balance}
    
\begin{align}
    & \sum_{g\in G} p_{gt} = D_{t} + \sum_{h \in H} \bigl [ p^{st,ch}_{ht} - p^{st,dis}_{ht} \bigr ] - \Delta^-_{t} + \Delta^+_{t};  \notag \\
    & \hspace{200pt} \forall t \in T \label{BaselineModel_v1_2}  \\
    & \Delta^-_{t}, \Delta^+_{t} \geq 0; \forall t \in T \label{BaselineModel_v1_3}
\end{align}

Power balance is enforced for each period $t$ via constraints \eqref{BaselineModel_v1_2}. In addition, the non-negative nature of the imbalance variables is imposed by constraints \eqref{BaselineModel_v1_3}.

\subsubsection{Reserve margin}
        
\begin{align}
    & \sum_{g \in G^{res,providers}} r^{up}_{gt} + \sum_{h \in H} r^{st,up}_{ht} \geq r^{up,min}_{t} \notag\\
    & \hspace{150pt} - \delta^{up,short}_{t}; \forall t \in T \label{BaselineModel_v1_4}  \\
    & \delta^{up,short}_{t} \geq 0; \forall t \in T \label{BaselineModel_v1_5}
\end{align}

Constraints \eqref{BaselineModel_v1_4} establish a minimum amount of reserve to be provisioned during each period $t$. Moreover, constraints \eqref{BaselineModel_v1_5} enforce non-negativity for reserve shortage variables $\delta^{up,short}_{t}$, which are penalized in the objective function. 

\subsubsection{Storage devices}
    
\begin{align}
    & v_{ht} = v_{h,t^{ini}} + \eta_h p^{st,ch}_{ht} - p^{st,dis}_{ht}; \forall h \in H, t = 1 \label{BaselineModel_v1_6} \\ 
    & v_{ht} = v_{h,t-1} + \eta_h p^{st,ch}_{ht} - p^{st,dis}_{ht}; \forall h \in H, t \in T | t \geq 2 \label{BaselineModel_v1_7} \\ 
    & v_{h,t^{ini}} = v_{h,t^{end}} 
    ; \forall h \in H \label{BaselineModel_v1_8} \\
    & \underline{V}_h \leq  v_{ht} \leq \overline{V}_h; \forall h \in H^{fixed}, t \in T \label{BaselineModel_v1_9} \\
    & \underline{V}_h \leq  v_{ht} \leq \bigl ( \overline{V}^{ini}_h + x^{st,energy \dagger}_{h} \bigr ); \forall h \in H^{cand}, t \in T \label{BaselineModel_v1_10} \\
    & 0 \leq p^{st,ch}_{ht} \leq \overline{P}^{st,power}_h; \forall h \in H^{fixed}, t \in T \label{BaselineModel_v1_11} \\
    & 0 \leq p^{st,dis}_{ht} + r^{st,up}_{ht} \leq \overline{P}^{st,power}_h; \forall h \in H^{fixed}, \notag\\
    & \hspace{200pt} t \in T \label{BaselineModel_v1_12} \\
    & 0 \leq p^{st,ch}_{ht} \leq \overline{P}^{st,power,ini}_h +  x^{st,power \dagger}_{h}; \notag\\
    & \hspace{145pt} \forall h \in H^{cand}, t \in T \label{BaselineModel_v1_13} \\
    & 0 \leq p^{st,dis}_{ht} + r^{st,up}_{ht} \leq \overline{P}^{st,power,ini}_h + x^{st,power \dagger}_{h}; \notag\\
    & \hspace{145pt} \forall h \in H^{cand}, t \in T \label{BaselineModel_v1_14} \\
    & v_{ht} - r^{st,up}_{ht} \geq \underline{V}_h; \forall h \in H, t \in T \label{BaselineModel_v1_15} \\
    & x^{st,energy \dagger}_{h} = x^{st,power \dagger}_{h} \overline{S}_h; \forall h \in H^{cand} \label{BaselineModel_v1_16} \\
    & 0 \leq x^{st,power \dagger}_{h} \leq \overline{x}^{st,power}_{h}; \forall h \in H^{cand} \label{BaselineModel_v1_17} \\
    & 0 \leq x^{st,energy \dagger}_{h} \leq \overline{x}^{st,energy}_{h}; \forall h \in H^{cand} \label{BaselineModel_v1_18} \\
    & p^{st,ch}_{ht}, p^{st,dis}_{ht}, 
    r^{st,up}_{ht} \geq 0; \forall h \in H, t \in T \label{BaselineModel_v1_19} 
\end{align}

Constraints \eqref{BaselineModel_v1_6} and \eqref{BaselineModel_v1_7} update the state of charge of each storage system $h$ throughout the time periods. Constraints \eqref{BaselineModel_v1_8} impose equality between initial and final states of charge of each storage system. Limits for states of charge are enforced via constraints \eqref{BaselineModel_v1_9} and \eqref{BaselineModel_v1_10} for existing and candidate storage systems. Analogously, constraints \eqref{BaselineModel_v1_11}--\eqref{BaselineModel_v1_14} express power charging and discharging capacities for existing and candidate storage systems. Constraints \eqref{BaselineModel_v1_15} ensure that there is sufficient state of charge to hold the provisioned reserves. Constraints \eqref{BaselineModel_v1_16} impose a relationship between power and energy capacity of each candidate storage system $h$ according to the specified  duration $\overline{S}_h$. Limits for investments in candidate storage systems are enforced by \eqref{BaselineModel_v1_17} and \eqref{BaselineModel_v1_18} and the non-negativity of power charging and discharging as well as reserve provision variables is enforced in \eqref{BaselineModel_v1_19}.    

\subsubsection{Generators}

\begin{align}
    & 0 
    \leq p_{gt} \leq \overline{p}^{rem}_{g} - r^{up}_{gt}; \forall g \in G^{firm,fixed}, t \in T \label{BaselineModel_v1_20} \\
    & 0 \leq p_{gt} \leq \overline{P}_g f^{available}_{gt}; \notag\\
    & \hspace{38pt} \forall g \in G^{renew,fixed}~|~g \notin G^{res,providers} 
    , t \in T \label{BaselineModel_v1_21} \\
    & 0 
    \leq p_{gt} \leq \overline{P}_g f^{available}_{gt} - r^{up}_{gt}; \notag\\
    & \hspace{46pt}\forall g \in G^{renew,fixed} ~\cap~ G^{res,providers} 
    , t \in T \label{BaselineModel_v1_22} \\    
    & 0 \leq r^{up}_{gt} \leq \overline{p}^{rem}_{g} \overline{R}^{up,factor}_g ; \forall g \in G^{firm,fixed}, t \in T \label{BaselineModel_v1_23} \\
    & 0 \leq r^{up}_{gt} \leq \overline{P}_g f^{available}_{gt} \overline{R}^{up,factor}_g  ; \notag\\
    & \hspace{46pt} \forall g \in G^{renew,fixed} ~\cap~ G^{res,providers} 
    , t \in T  \label{BaselineModel_v1_24} \\
    & 0 
    \leq p_{gt} \leq {x}^{inv,gen \dagger}_{gt} - r^{up}_{gt}; \forall g \in G^{firm,cand}, t \in T \label{BaselineModel_v1_25} \\
    & 0 \leq p_{gt} \leq x^{inv,gen \dagger}_{g} f^{available}_{gt};\notag\\
    & \hspace{41pt}  \forall g \in G^{renew,cand}~|~g \notin G^{res,providers}
    , t \in T \label{BaselineModel_v1_26} \\
    & 0 
    \leq p_{gt} \leq x^{inv,gen \dagger}_{g} f^{available}_{gt} - r^{up}_{gt};\notag\\
    & \hspace{50pt} \forall g \in G^{renew,cand}  ~\cap~ G^{res,providers}
    , t \in T \label{BaselineModel_v1_27} \\ 
    & 0 \leq r^{up}_{gt} \leq {x}^{inv,gen \dagger}_{gt} \overline{R}^{up,factor}_g;\notag\\
    & \hspace{128pt} \forall g \in G^{firm,cand}, t \in T \label{BaselineModel_v1_28} \\
    & 0 \leq r^{up}_{gt} \leq {x}^{inv,gen \dagger}_{g} f^{available}_{gt} \overline{R}^{up,factor}_g; \notag\\
    & \hspace{48pt} \forall g \in G^{renew,cand} ~\cap~  G^{res,providers}, t \in T \label{BaselineModel_v1_29} \\
    & 0 \leq x^{inv,gen \dagger}_{g} \leq \overline{x}^{inv,gen}_{g} ; \forall g \in G^{cand} \label{BaselineModel_v1_30} \\
    & p_{gt} - p_{g,t-1} \leq RU^{factor}_g \overline{p}^{rem}_{g};\notag\\ 
    & \hspace{100pt} \forall g \in G^{firm,fixed}, t \in T | t \geq 2 \label{BaselineModel_v1_31} \\
    & p_{g,t-1} - p_{gt} \leq RD^{factor}_g \overline{p}^{rem}_{g};\notag\\ 
    & \hspace{100pt} \forall g \in G^{firm,fixed}, t \in T | t \geq 2 \label{BaselineModel_v1_32} \\
    & p_{gt} - p_{g,t-1} \leq RU^{factor}_g {x}^{inv,gen \dagger}_{gt};\notag\\
    & \hspace{100pt} \forall g \in G^{firm,cand}, t \in T | t \geq 2 \label{BaselineModel_v1_33} \\
    & p_{g,t-1} - p_{gt} \leq RD^{factor}_g {x}^{inv,gen \dagger}_{gt}; \notag\\
    &\hspace{100pt} \forall g \in G^{firm,cand}, t \in T | t \geq 2 \label{BaselineModel_v1_34} \\
    & \overline{p}^{rem}_{g} = \overline{P}_{g} - x^{ret,gen \dagger}_{g}; \forall g \in G^{firm,fixed} \label{BaselineModel_v1_35} \\
    &  \underline{x}^{ret,gen}_{g} \overline{P}_g\leq x^{ret,gen \dagger}_{g} \leq \overline{x}^{ret,gen}_{g} \overline{P}_g ; \notag \\
    &\hspace{150pt}  \forall g \in G^{firm,fixed} \label{BaselineModel_v1_36} 
\end{align}

Constraints \eqref{BaselineModel_v1_20} impose generation limits for existing firm units, whereas \eqref{BaselineModel_v1_21} and \eqref{BaselineModel_v1_22} do the same for existing renewable units. The maximum reserve provision capacities of existing firm and renewable units are expressed through \eqref{BaselineModel_v1_23} and \eqref{BaselineModel_v1_24}. Likewise, candidate units have limits enforced on their generation output and reserve provision via constraints \eqref{BaselineModel_v1_25}--\eqref{BaselineModel_v1_29}. Constraints \eqref{BaselineModel_v1_30} establish a maximum amount of generation capacity to be installed for each candidate unit. Generation ramping is modeled via constraints \eqref{BaselineModel_v1_31}--\eqref{BaselineModel_v1_34}. Finally, constraints \eqref{BaselineModel_v1_35} indicate the remaining capacity of each generator $g$ after total or partial retirement and constraints \eqref{BaselineModel_v1_36} limit capacity retirement.

\vspace{-0.4cm}
\subsection{Opportunity value maximization model}

The {\it opportunity value maximization model}, formulated as \eqref{OpportunityValueMaximization_v1_1}--\eqref{OpportunityValueMaximization_v1_3}, seeks to obtain the maximum technology cost of LDES below which the overall costs associated with investments, operations, and maintenance are lower or equal to $q^*$ while complying with all technical constraints considered in the {\it baseline model}. The cost bound $q^*$ is the value of the objective function \eqref{BaselineModel_v1_1} and imposing this bound essentially ensures that the resulting opportunity value of LDES will not imply an overall system cost higher than that attained through the {\it baseline model}. 
\begin{align}
    & 
    \underset{{\substack{ \Delta^-_t, \Delta^+_t, \delta^{up,short}_t, c^{BC},\\ p_{gt}, p^{st,ch}_{ht}, p^{st,dis}_{ht}, \overline{p}^{rem}_{g},q^{over}\\ r^{st,up}_{ht}, r^{up}_{gt}, v_{ht},x^{inv,gen \dagger}_{g},\\  x^{ret,gen \dagger}_{g}, x^{st,energy \dagger}_{g}, x^{st,power \dagger}_{g} 
    }}}{\text{Maximize}}  \hspace{0.1cm} c^{BC}  - C^{over} q^{over} \label{OpportunityValueMaximization_v1_1}\\
    &\text{Subject to:}\notag \\
    & \sum_{h\in H^{long,cand} }  c^{BC} x^{st,power \ddagger}_{h}  \notag \\ 
    & \hspace{8pt} + \sum_{g\in G^{cand}}  C^{inv,gen}_{ge} x^{inv,gen \dagger}_{g}  \notag \\
    & \hspace{8pt}  + \sum_{h \in H^{short,cand}} \Bigl [ C^{st,energy}_{h} x^{st,energy \dagger}_{h} + C^{st,power}_{h} x^{st,power \dagger}_{h} \Bigr ] \notag \\
    \notag \\
    & \hspace{8pt} +  \sum_{t \in T} \Biggl [ \sum_{g\in G} \Bigl [ C^p_{gt}p_{gt} + C^{up}_{gt} r^{up}_{gt} \Bigr] \notag \\
    & \hspace{8pt} + C^I \Bigl (\Delta^-_{t} + \Delta^+_{t} \Bigl ) + C^{short} \delta^{up,short}_{t} \Biggr] \notag\\
    & \hspace{8pt}+  \sum_{g \in G^{firm,fixed}} C^{fom,gen}_{g} \overline{p}^{rem}_{g} +  \sum_{g \in G^{renew,fixed}} C^{fom,gen}_{g} \overline{P}_{g} \notag\\
    & \hspace{8pt} + \sum_{g \in G^{cand}} C^{fom,gen}_{g} {x}^{inv,gen \dagger}_{g} \notag\\
    & \hspace{8pt} + \sum_{h \in H^{fixed}} C^{fom,st,power}_{h} \overline{P}^{st,power}_h   \notag\\
    & \hspace{8pt} + \sum_{h \in H^{cand}} C^{fom,st,power}_{h} \Bigl ( \overline{P}^{st,power,ini}_h \notag\\
    & \hspace{8pt}  + x^{st,power \dagger}_{h} \Bigr ) 
    \leq q^* + q^{over}\label{OpportunityValueMaximization_v1_2} \\
    & \text{Constraints \eqref{BaselineModel_v1_2}-\eqref{BaselineModel_v1_36}}  \label{OpportunityValueMaximization_v1_3}
\end{align}

For a given user-defined amount of newly installed LDES power capacity $\sum_{h\in H^{long,cand} } x^{st,power \ddagger}_{h}$, model \eqref{OpportunityValueMaximization_v1_1}--\eqref{OpportunityValueMaximization_v1_3} maximizes the opportunity value of LDES while penalizing the auxiliary variable $q^{over}$, which relaxes constraint \eqref{OpportunityValueMaximization_v1_2} in case it is not possible to comply with constraints \eqref{OpportunityValueMaximization_v1_3} while respecting the cost bound $q^*$. We define the term $\sum_{h\in H^{long,cand} }  c^{BC} x^{st,power \ddagger}_{h}$ in \eqref{OpportunityValueMaximization_v1_2} as the opportunity value of LDES. Furthermore, according to Proposition \ref{prop_1}, the optimal solution for model \eqref{OpportunityValueMaximization_v1_1}--\eqref{OpportunityValueMaximization_v1_3} provides $c^{BC^*}$, which is the boundary cost of LDES.


\begin{prop}\label{prop_1}
Considering a predefined quantity of newly installed LDES, $\sum_{h\in H^{long,cand} } x^{st,power \ddagger}_{h}$, and supposing that \eqref{OpportunityValueMaximization_v1_1}--\eqref{OpportunityValueMaximization_v1_3} is feasible for $q^{over}=0$, in the optimal solution for \eqref{OpportunityValueMaximization_v1_1}--\eqref{OpportunityValueMaximization_v1_3}, $c^{BC^*}$ will be a boundary cost in \$/MW below which the inclusion of investment costs related to LDES will not imply an overall system cost higher than $q^*$.
\end{prop}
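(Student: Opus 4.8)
The plan is to exploit the very simple way the new variable $c^{BC}$ enters the \emph{opportunity value maximization model}: it appears only in the objective \eqref{OpportunityValueMaximization_v1_1} and in the single term $\bigl(\sum_{h\in H^{long,cand}} x^{st,power \ddagger}_{h}\bigr)c^{BC}$ on the left of the budget inequality \eqref{OpportunityValueMaximization_v1_2}, and that term is \emph{linear} in $c^{BC}$ because $\sum_{h\in H^{long,cand}} x^{st,power \ddagger}_{h}$ is a parameter (which we take to be strictly positive, the non-trivial case). Hence, once every other variable is held fixed, \eqref{OpportunityValueMaximization_v1_2} is just an upper bound on $c^{BC}$ that is affine and increasing in it, so maximizing $c^{BC}$ drives \eqref{OpportunityValueMaximization_v1_2} to equality and returns the largest LDES unit cost consistent with the baseline cost target. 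For bookkeeping I would let $\mathcal{C}(\mathbf{z})$ denote the sum of all the non-LDES cost terms in \eqref{OpportunityValueMaximization_v1_2} (candidate-generator and short-duration-storage investment, operation, reserve, imbalance and shortage penalties, and FO\&M) evaluated at a vector $\mathbf{z}$ of the remaining ``baseline'' variables, and record the two facts I will use: $\mathcal{C}(\mathbf{z})\ge 0$, and both $\mathcal{C}$ and the technical feasibility set \eqref{BaselineModel_v1_2}--\eqref{BaselineModel_v1_36} are independent of $c^{BC}$.

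First I would show there is an optimal solution of \eqref{OpportunityValueMaximization_v1_1}--\eqref{OpportunityValueMaximization_v1_3} with $q^{over\,*}=0$. Starting from any feasible point with $q^{over}=\varepsilon>0$, decrease $c^{BC}$ by $\varepsilon/\sum_{h} x^{st,power \ddagger}_{h}$ and reset $q^{over}=0$; by \eqref{OpportunityValueMaximization_v1_2} the point stays feasible, and the objective changes by $\varepsilon\bigl(C^{over}-1/\sum_{h} x^{st,power \ddagger}_{h}\bigr)\ge 0$ once $C^{over}$ is taken large enough (this is also what makes \eqref{OpportunityValueMaximization_v1_1}--\eqref{OpportunityValueMaximization_v1_3} bounded). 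Combined with the hypothesis that the model is feasible at $q^{over}=0$, this gives an optimal pair $(c^{BC^*},\mathbf{z}^*)$ with $q^{over\,*}=0$, so that \eqref{OpportunityValueMaximization_v1_2} reads $c^{BC^*}\sum_{h} x^{st,power \ddagger}_{h}+\mathcal{C}(\mathbf{z}^*)\le q^*$.

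Next I would verify the boundary property directly and its tightness. For any unit cost $c\le c^{BC^*}$, pair it with the profile $\mathbf{z}^*$: since $\mathbf{z}^*$ satisfies \eqref{BaselineModel_v1_2}--\eqref{BaselineModel_v1_36} and those constraints do not involve the LDES cost, this is a technically admissible plan deploying exactly the prescribed LDES quantity, with overall system cost $c\sum_{h} x^{st,power \ddagger}_{h}+\mathcal{C}(\mathbf{z}^*)\le c^{BC^*}\sum_{h} x^{st,power \ddagger}_{h}+\mathcal{C}(\mathbf{z}^*)\le q^*$, where the first inequality uses $c\le c^{BC^*}$ and $\sum_{h} x^{st,power \ddagger}_{h}\ge 0$. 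Thus including LDES investment at any cost at or below $c^{BC^*}$ keeps the overall system cost no larger than $q^*$, which is exactly the assertion. For completeness I would add that $c^{BC^*}$ is the tight threshold: if some $c>c^{BC^*}$ admitted a technically feasible plan $\mathbf{z}'$ with cost $\le q^*$, then $(c^{BC}=c,\mathbf{z}',q^{over}=0)$ would be feasible for \eqref{OpportunityValueMaximization_v1_1}--\eqref{OpportunityValueMaximization_v1_3} with objective $c>c^{BC^*}$, contradicting optimality.

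The step that actually requires care is the reduction to $q^{over\,*}=0$, because it is what ties $c^{BC^*}$ to the \emph{exact} baseline cost $q^*$ rather than to a relaxed target $q^*+q^{over\,*}$, and it is where the (implicit) requirement that the penalty $C^{over}$ be sufficiently large — concretely $C^{over}\ge 1/\sum_{h} x^{st,power \ddagger}_{h}$ — is used; one should also be explicit that the technical constraints \eqref{BaselineModel_v1_2}--\eqref{BaselineModel_v1_36} genuinely do not depend on $c^{BC}$, since this is what licenses reusing $\mathbf{z}^*$ at any cost $c\le c^{BC^*}$. Everything else is the elementary observation that \eqref{OpportunityValueMaximization_v1_2} is monotone in $c^{BC}$ over a $c^{BC}$-invariant feasibility set, so that maximizing $c^{BC}$ walks that constraint to equality.
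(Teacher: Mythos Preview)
Your proposal is correct and follows essentially the same approach as the paper's proof: both rely on the observation that, because $c^{BC}$ enters only linearly in \eqref{OpportunityValueMaximization_v1_2} with a fixed positive coefficient, maximizing $c^{BC}$ is equivalent to minimizing the remaining (non-LDES) costs, and both invoke a sufficiently large $C^{over}$ to force $q^{over^*}=0$ so that the budget constraint ties $c^{BC^*}$ exactly to $q^*$. Your version is simply more explicit---you spell out the swap argument for $q^{over^*}=0$ (with the concrete threshold $C^{over}\ge 1/\sum_h x^{st,power \ddagger}_{h}$), verify the boundary property and its tightness directly, and flag that constraints \eqref{BaselineModel_v1_2}--\eqref{BaselineModel_v1_36} are independent of $c^{BC}$---but the underlying logic matches the paper's informal sketch.
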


\begin{proof}
Note that, according to the structure of  \eqref{OpportunityValueMaximization_v1_1} and \eqref{OpportunityValueMaximization_v1_2}, investment, operational, and maintenance costs will be minimized in the {\it opportunity value maximization model} as they are in the {\it baseline model}. This cost minimization will occur in order to maximize the variable $c^{BC}$ as much as possible for a given predetermined newly installed capacity of LDES, represented by $x^{st,power \ddagger}_{h}$ for each storage $h$. 
In this context, if $q^{over}=0$ is a feasible solution and $C^{over}$ is high enough, in the optimal solution, the term $\sum_{h\in H^{long,cand} }  c^{BC^*} x^{st,power \ddagger}_{h}$ in \eqref{OpportunityValueMaximization_v1_2} will be equivalent to the maximum reduction in the overall cost of the system (compared to $q^*$) as a result of the installation of a certain amount, $\sum_{h\in H^{long,cand} } x^{st,power \ddagger}_{h}$, of LDES in the system. Therefore, (i) the term $\sum_{h\in H^{long,cand} }  c^{BC^*} x^{st,power \ddagger}_{h}$ will be the corresponding maximum opportunity value of LDES and, consequently, (ii) $c^{BC^*}$ will be a threshold or boundary cost in \$/MW below which the integration of a predefined quantity of LDES, $\sum_{h\in H^{long,cand} } x^{st,power \ddagger}_{h}$, will reduce the investment (not counting LDES), operational, and maintenance costs such that the inclusion of investment costs related to LDES will not imply an overall system cost higher than $q^*$. 
\end{proof}

\vspace{-0.4cm}
\subsection{Solution framework}

As previously mentioned, the {\it baseline model} and the {\it opportunity value maximization model} are solved in sequence. Within our proposed framework, different circumstances can be considered to evaluate the boundary cost of LDES. In this work, we focus specifically on determining the opportunity value of LDES to achieve a fully decarbonized power system which entails a complete retirement of gas generators and a potential increase in renewable installed capacity. To do so, we obtain $q^*$ by solving the {\it baseline model} while enforcing $\overline{x}^{st,energy}_{h} = 0, \forall h \in H^{cand} $ in constraints \eqref{BaselineModel_v1_18}; $\overline{x}^{inv,gen}_{g}=0 , \forall g \in G^{cand}  $ in constraints \eqref{BaselineModel_v1_30}; and $\underline{x}^{ret,gen}_{g} = \overline{x}^{ret,gen}_{g} = 0, \forall g \in G^{firm,fixed}$ in constraints \eqref{BaselineModel_v1_36}, which essentially means that no investments and no retirements are allowed in the {\it baseline model}. Furthermore, we write the {\it opportunity value maximization model} with $x^{st,energy \dagger}_{h} = x^{st,energy \ddagger}_{h}, \forall h \in H^{long,cand} $  (where $x^{st,energy \ddagger}_{h}$ is predefined newly installed capacity for storage system $h$) and $\overline{x}^{st,energy}_{h} \geq 0, \forall h \in H^{short,cand} $ in constraints \eqref{BaselineModel_v1_18}; $\overline{x}^{inv,gen}_{g} \geq 0, \forall g \in G^{renew,cand}  $ and $\overline{x}^{inv,gen}_{g}=0, \forall g \in G^{firm,cand} $ in constraints \eqref{BaselineModel_v1_30}; and $\underline{x}^{ret,gen}_{g} = \overline{x}^{ret,gen}_{g} = 1, \forall g \in G^{gas,fixed}$ and $\underline{x}^{ret,gen}_{g} = \overline{x}^{ret,gen}_{g} = 0, \forall g \in G^{firm,fixed} \setminus G^{gas,fixed}$ in constraints \eqref{BaselineModel_v1_36}.

\vspace{-0.6cm}
\section{Case Study}\label{sec:CaseStudy}
\vspace{-0.1cm}

In this case study, we focus on the the critical role of LDES technologies in providing the necessary means to enable the California's power system to move toward full decarbonization in 2050. Gas power plants are currently the main source of firm generation to counterbalance renewable intermittence in the state. However, in a fully decarbonized future power system, gas units should be replaced by a more environmentally friendly option. This firm generation role can be performed by LDES technologies. Since these technologies are still not mature, here we estimate, via the proposed methodology, the boundary costs that would make them economically viable in for the California's system.

The numerical experiments have been implemented in Python and performed on a machine with one Intel\textregistered ~Corel\texttrademark ~i9-12900K 3.20 GHz and 32 GB of RAM, using Gurobi 10.0.1 under Pyomo. {\color{black}For replicability purposes, the input data can be downloaded
from \cite{dataset_california_system}.}

In the next subsections, we present an overview of the data source that we consider in this case study, a discussion about the projected California's energy matrix for 2050 without any LDES technology as well as the financial and operational results of our case study.

\begin{figure}[!tb]
    \centering
    \includegraphics[width=.35\textwidth,height=0.30
     \textheight,keepaspectratio]{./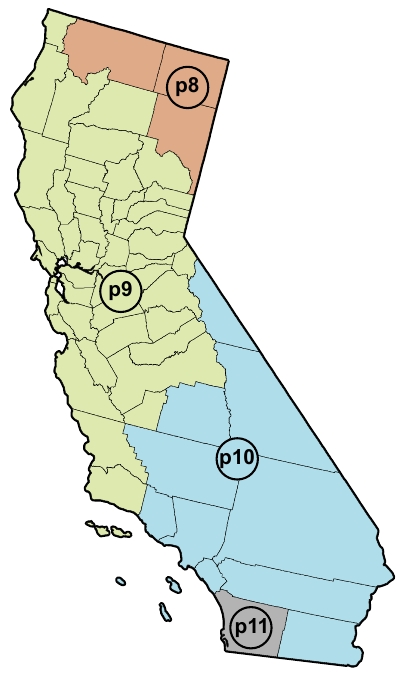}
    \caption{Map of California with BA representatives covered by the model. Adapted from \cite{gagnon2023cambium}.}
    \vspace{-0.4cm}
    \label{Fig:California_map}
\end{figure}

\subsection{Data source}

The dataset used in our computational experiments is based on a compilation of data from several reliable sources. These sources provide a comprehensive view of how the California's energy matrix is projected to be in 2050. We accessed information about system load as well as available generation technologies with their respective installed capacities through data made available by NREL's Cambium 2022 \cite{gagnon2023cambium_site}. More specific parameters such as ramp rates of each power plant technology come from the Cambium 2022 documentation \cite{gagnon2023cambium}. Furthermore, we also consider capacity factors from the Cambium 2022 \cite{gagnon2023cambium_site}, which are essentially computed as the ratio between effective provided generation at each time period under consideration and installed capacity.

Values of fixed operation and maintenance (FO\&M) costs of each generator are provided by the Regional Energy Deployment System (ReEDS)  base \cite{2021reeds}, which is publicly available on GitHub \cite{reedsGitHub}, and complemented by information present in the 2022 Annual Technology Baseline (ATB) \cite{atb2022}. In addition, we use fuel prices that are reported in the AEO 2023 \cite{AEO_2023}. Moreover, from the ATB 2022, we take into account relevant data related to investment costs associated with renewable power plants.

Our financial results in this study are expressed in 2022 U.S. dollars. Monetary values are updated considering the Consumer Price Index (CPI) \cite{usinflationcalculator}.

\begin{table}
\caption{Installed capacity and the limit of investment by technology with the respective number of generators.}
\begin{threeparttable}[b]
\footnotesize
\begin{tabularx}{\linewidth}{lrrrr}
\hline
\textbf{Technology} & \multicolumn{1}{c}{\textbf{\begin{tabular}[c]{@{}c@{}}Installed\\ Capacity \\ (GW)\end{tabular}}} & \multicolumn{1}{c}{\textbf{\begin{tabular}[c]{@{}c@{}}Numb. of \\ Existing \\ Gen.\end{tabular}}} & \multicolumn{1}{c}{\textbf{\begin{tabular}[c]{@{}c@{}}Limit of \\ Invest. \\ (GW)\end{tabular}}} & \multicolumn{1}{c}{\textbf{\begin{tabular}[c]{@{}c@{}}Numb. of \\ Candidate \\ Gen.\end{tabular}}} \\ \hline
Biopower & 0.29 & 8 & - & - \\
Distributed PV & 28.50 & 4 & 28.50 & 4 \\
Distributed utility PV & 0.04 & 3 & 0.04 & 3 \\
Natural gas cc & 18.39 & 9 & - & - \\
Natural gas ct & 8.99 & 10 & - & - \\
Geothermal & 2.23 & 3 & - & - \\
Hydropower ED\tnote{a} & 3.49 & 2 & - & - \\
Hydropower END\tnote{b} & 6.70 & 4 & - & - \\
Hydropower UD\tnote{c} & 1.09 & 1 & - & - \\
Hydropower UND\tnote{d} & 0.15 & 3 & - & - \\
Landfill gas & 0.07 & 9 & - & - \\
Oil/gas steam & 0.15 & 9 & - & - \\
Utility-scale PV & 83.59 & 4 & 83.59 & 4 \\
Offshore Wind & 25.00 & 2 & 25.00 & 2 \\
Onshore Wind & 9.48 & 4 & 9.48 & 4 \\ \hline
\textbf{Total} & \textbf{188.16} & \textbf{75} & \textbf{146.61} & \textbf{17} \\ \hline
\end{tabularx}
\begin{tablenotes}
\item [a] Existing dispatchable.
\item [b] Existing non-dispatchable.
\item [c] Undiscovered dispatchable.
\item [d] Undiscovered non-dispatchable.
\vspace{-0.8cm}
\end{tablenotes}
\label{generators_capacity}
\end{threeparttable}
\end{table}

\begin{table}
\caption{Power capacity and the limit of investment by technology with the respective number of storage.}
\begin{threeparttable}[b]
\footnotesize
\begin{tabular}{lrrrr}
\hline
\textbf{Technology} & \multicolumn{1}{c}{\textbf{\begin{tabular}[c]{@{}c@{}}Power \\ Capacity \\ (GW)\end{tabular}}} & \multicolumn{1}{c}{\textbf{\begin{tabular}[c]{@{}c@{}}Numb. of \\ Existing \\ Storage\end{tabular}}} & \multicolumn{1}{c}{\textbf{\begin{tabular}[c]{@{}c@{}}Limit of \\ Invest. \\ (GW)\end{tabular}}} & \multicolumn{1}{c}{\textbf{\begin{tabular}[c]{@{}c@{}}Numb. of \\ Candidate \\ Storage\end{tabular}}} \\ \hline
Battery   2h & 0.30 & 2 & - & - \\
Battery 4h & 17.46 & 4 & 43.00 & 1 \\
Battery 6h & 9.55 & 3 & - & - \\
Battery 8h & 4.03 & 4 & - & - \\
PHS   (12h) & 11.43 & 3 & - & - \\
LDES & - & - & 0 - 75\tnote{a} & 1 \\ \hline
\textbf{Total} & \textbf{42.78} & \textbf{16} & \multicolumn{1}{r}{\textbf{-}} & \textbf{2} \\ \hline
\end{tabular}
\begin{tablenotes}
\item [a] The range consists of discrete values between 0 and 75.
\end{tablenotes}
\label{storage_capacity}
\end{threeparttable}
\end{table}

\vspace{-0.2cm}
\subsection{California system} \label{Sec:California_System}

Following the system representation used by the databases of Cambium and ReEDS, in this paper, we consider four major balancing areas (BA) in the California's territory as depicted in Fig. \ref{Fig:California_map}. Our analysis for this four-area system comprises an hourly temporal resolution with 8760 intervals throughout the year. In addition, in accordance with current standards, we impose a reserve requirement equivalent to 15\% of the system demand for all time periods \cite{caiso_report_2023}.

The database provided by Cambium projects a system with 714 existing generators in California for the year of 2050. For our case study, we applied clustering using the K-means method, aiming for three representative generators of each technology (high, low, and medium cost) within each BA. The clustering criterion is the generation cost of the generator, i.e., the $C^{p}_{gt}$ used in the model, resulting in a database with 75 fixed existing generators.

In addition to the fixed generators, we also consider a group of candidate generators that are all are renewables. Essentially, for each already existing renewable generator in 2050 (according to the projected data) within each BA, we set up a candidate renewable generator with the same specifications apart from the installed capacity, which is determined by the solution of our methodology. The resulting 17 candidate generators are listed in Table \ref{generators_capacity}.

\begin{figure}[!tb]
    \centering
    \includegraphics[width=.45\textwidth,height=0.5
     \textheight,keepaspectratio]{./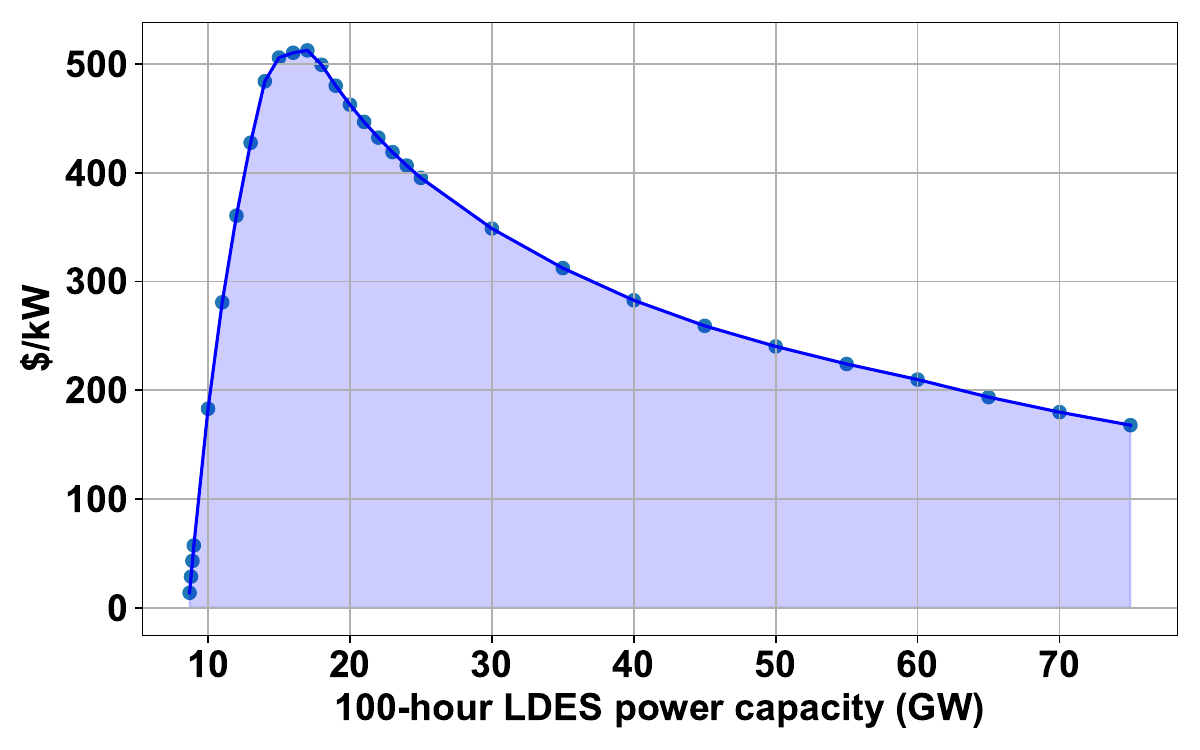}
    \caption{The boundary costs of LDES below which these technologies will be economically viable for the California's system in 2050. 
    }
    \label{Fig:Boundary_cost_power}
\end{figure}

Also, in this study, we include, as investment options, candidate SDES and LDES systems as summarized in Table \ref{storage_capacity}. The duration of the candidate SDES option is 4 hours and its maximum capacity investment is set to a value equivalent to the aggregate installed storage power capacity in California in 2050. This maximum capacity investment limit entails that our investment results are determined in terms of aggregate numbers for SDES systems and, therefore, may comprise a collection of SDES units to compose the indicated capacity to be added to the system. This aggregate assumption is also taken for the candidate LDES system which has a 100-hour duration and capacity limits ranging from 0 to 75 GW in a discrete manner. RTE values are set equal to 85\% and 42.5\% for SDES and LDES systems, respectively.

\begin{figure}[!tb]
    \centering
    \includegraphics[width=.45\textwidth,height=0.45
     \textheight,keepaspectratio]{./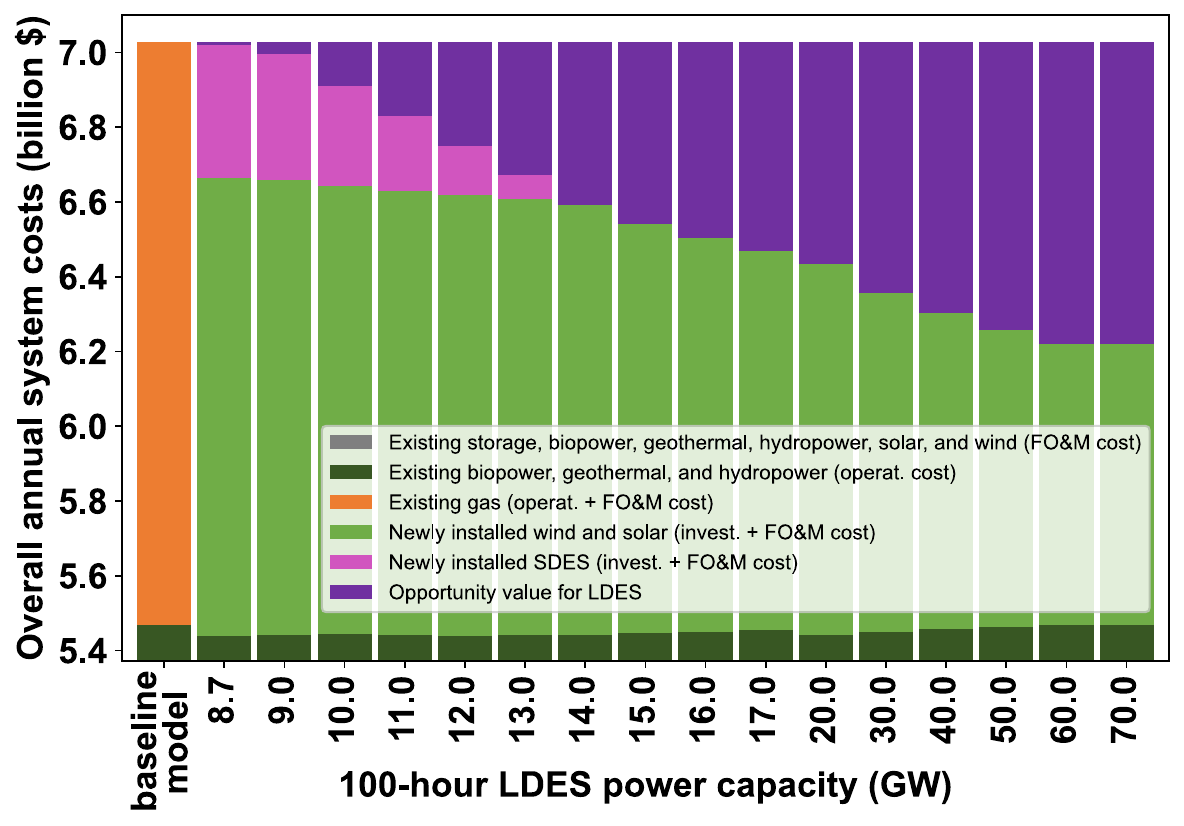}
    \caption{Opportunity values in purple associated with different power capacities of 100-hour LDES. The first bar corresponds to the system costs determined by the {\it baseline model} while considering generation from gas units and no inclusion of LDES systems. {\color{black}For illustration purposes, the FO\&M of some existing generators (gray bars which are ommitted since they are the same for all 100-hour LDES power capacities under consideration in this graph) and storages are omitted from the figure.}
    }
    %

    \label{Fig:Opportunity_value_LDES}
\end{figure}

\begin{figure}[!tb]
    \centering
    \includegraphics[width=.36\textwidth,height=0.35
     \textheight,keepaspectratio]
     {./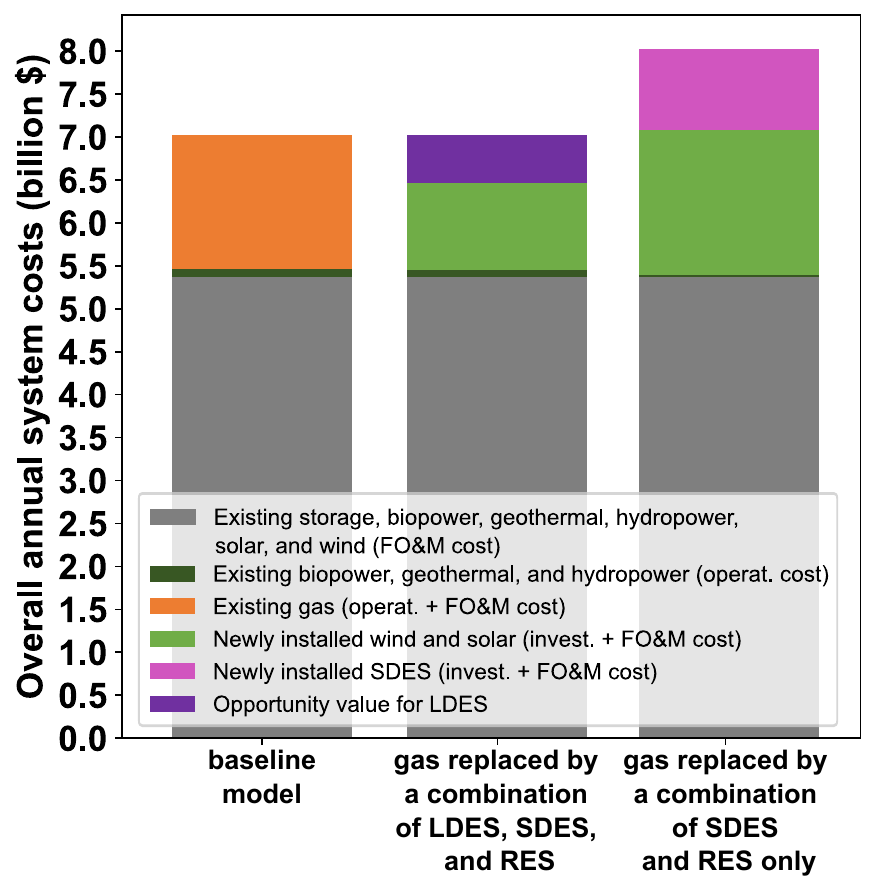}
    \caption{{\color{black}Comparison between (i) the reference costs from the {\it baseline model}, (ii) the resulting costs from the {\it opportunity value maximization model} when the gas is replaced by a combination of LDES, SDES, and RES, and (iii) the resulting costs from the {\it opportunity value maximization model} when the gas is replaced by a combination of SDES and RES only.} }
    \vspace{-0.2cm}
    \label{Fig:Cost_without_LDES}
\end{figure}

\begin{figure}[!tb]
    \centering
    \includegraphics[width=.45\textwidth,height=0.5
     \textheight,keepaspectratio]{./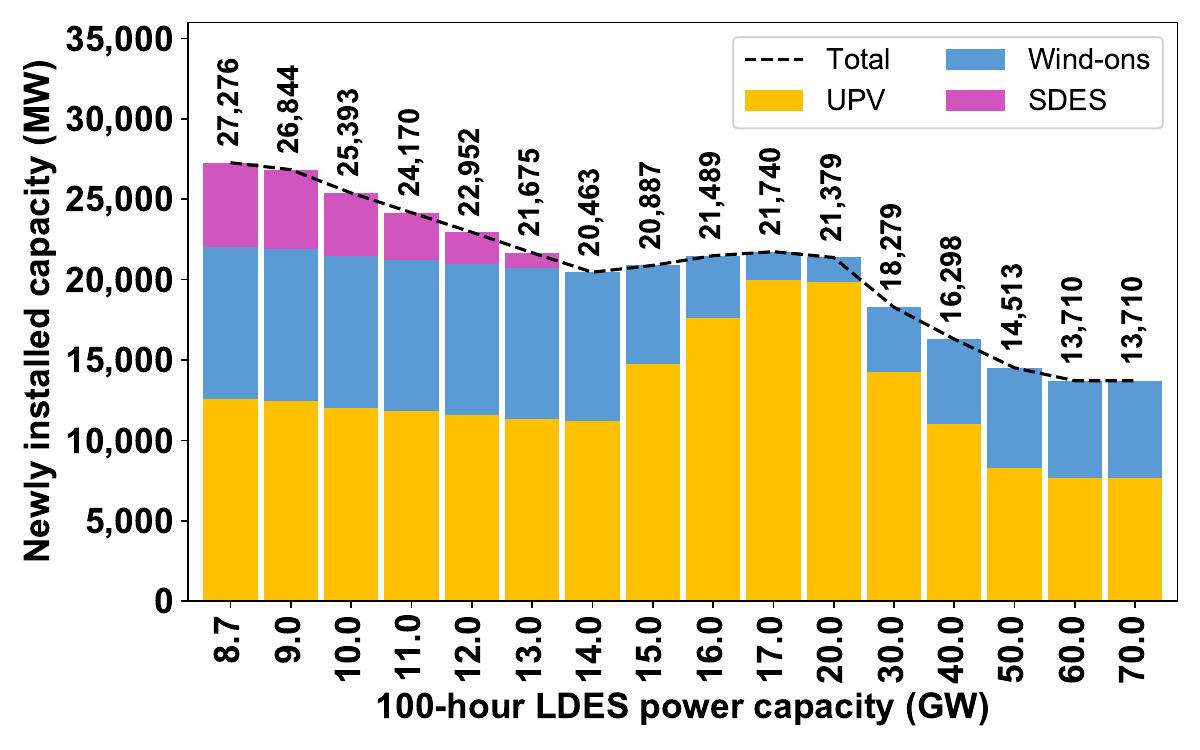}
    \caption{Additional investment required in California in 2050 to have a system without gas energy sources.}
    \vspace{-0.5cm}
    \label{Fig:Additional_investment}
\end{figure}

\subsection{{\color{black}Economic} results}

The boundary costs of LDES systems are defined in this paper as the {\color{black}technology costs} below which these technologies become economically viable. To obtain these boundary costs for California, we maximized the opportunity value following our proposed solution framework. As a reference, we consider the previously described data for the projected California's energy matrix in 2050 without any candidate assets to solve the {\it baseline model}. Then, we run the {\it opportunity value maximization model} for different amounts of LDES added to the system while bounding overall system costs below those of the {\it baseline model}, retiring 100\% of the gas power plants and considering candidate investments in SDES and renewable generators.

Our results indicate that at least {\color{black}8.7 GW} of 100-hour LDES will be necessary to retire gas power plans and maintain overall system costs limited to the same values determined by the {\it baseline model}, in which the participation of gas units is still included. According to Fig. \ref{Fig:Boundary_cost_power}, the boundary cost of {\color{black}8.7 GW} of 100-hour LDES is {\color{black}US\$13.74 $\text{kW}^{-1}$}. It is worth noting that, as we increase the power capacity of 100-hour LDES in Fig. \ref{Fig:Boundary_cost_power}, overall system costs decrease as a result of lower needed operational and investment costs to retire gas units (as it will be seen in the next graphs). Therefore, the boundary cost of LDES increases from  {\color{black}8.7 GW} to 17 GW, where it reaches a peak of US\$512.54 $\text{kW}^{-1}$. Then, after its peak, the boundary cost of LDES starts to fall since the rate of reduction in overall system costs per additional kW of LDES power capacity begins to decline.

The opportunity value for various amounts of LDES is shown in Fig. \ref{Fig:Opportunity_value_LDES}. The first bar on the left depicts the reference system costs obtained through the {\it baseline model}, including the operational and fixed costs of gas plants. {\color{black}The FO\&M costs of some existing generators (biopower, geothermal, hydropower, solar, and wind) and existing storages are the same for all bars, therefore they are omitted from Fig. \ref{Fig:Opportunity_value_LDES}.} It is worth mentioning that an eventual bar corresponding to a 100-hour LDES power capacity lower than 8.7 GW would result in an overall system cost higher than the reference cost obtained when gas units are still present in the system. In addition, it is interesting to see that, as the amount of 100-hour LDES power capacity increases, the need for SDES and renewable investments significantly decreases, which makes room for the opportunity value maximization of the LDES option.

{\color{black}In Fig. \ref{Fig:Cost_without_LDES}, we provide a direct comparison between (i) the reference costs obtained through the {\it baseline model}, (ii) the costs obtained via the {\it opportunity value maximization model} when gas is replaced by a combination of LDES (setting the 100-hour LDES power capacity to 17 GW), SDES and RES, and (iii) the costs obtained via the {\it opportunity value maximization model} when gas is replaced by a combination of SDES and RES only.} From these results, it is clear that the system would be substantially more expensive if we retire gas plants without LDES. On the other hand, when 17 GW of 100-hour LDES is present, overall system costs are the same as the reference cost as long as the cost associated with the LDES is at {\color{black}US\$512.54 $\text{kW}^{-1}$}.

\begin{figure}[!tb]
    \centering
    \includegraphics[width=.45\textwidth,height=0.45
     \textheight,keepaspectratio]{./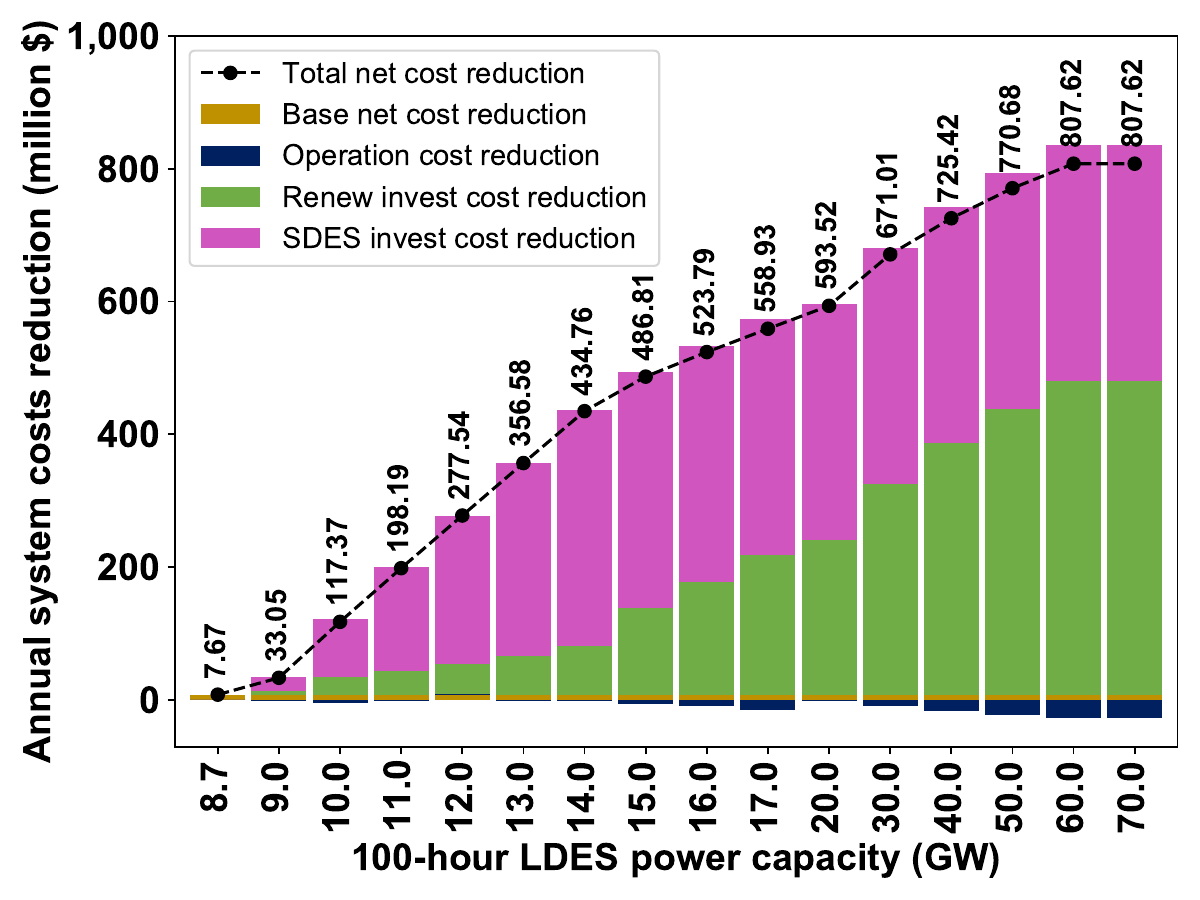}
    \caption{The cost reduction in the California system in 2050 for different quantities of LDES when the gas plants are retired. 
    The base net cost reduction refers to decrease in overall costs once 8.7GW of LDES are included in the system compared to when gas units are still present.}
    \vspace{-0.4cm}
    \label{Fig:Cost_Reduction}
\end{figure}

\begin{figure*}[!tb]
    \centering
    \includegraphics[width=0.95\textwidth,height=0.95
     \textheight,keepaspectratio]{./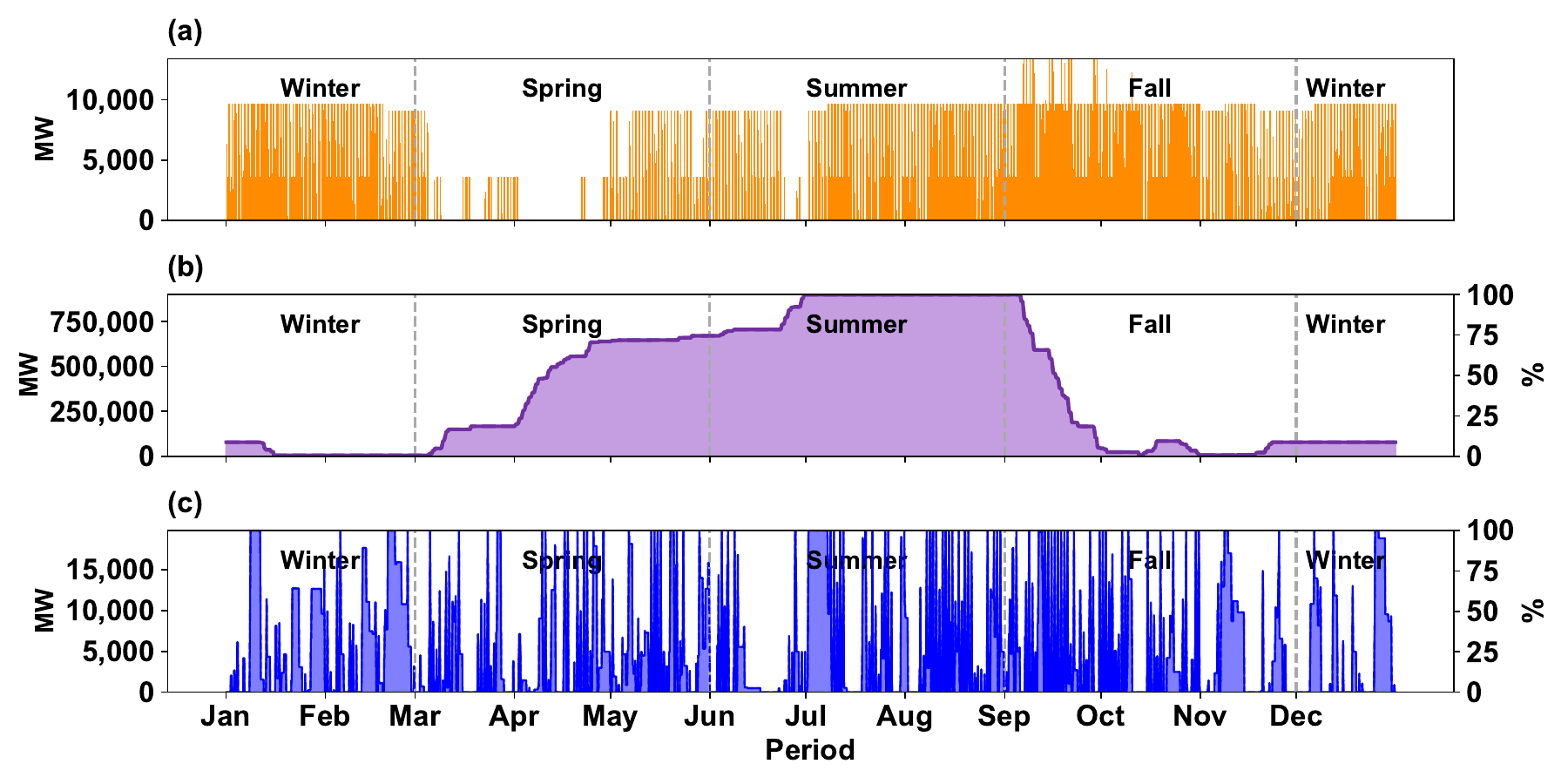}
    \caption{(a) Aggregate hourly dispatch of all gas power plants from the {\it baseline model}'s solution. (b) 100-hour LDES system with 9 GW of power capacity -- state of charge according to the {\it opportunity value maximization model}'s solution. (c) 4-hour SDES system with 5 GW of power capacity -- state of charge according to the {\it opportunity value maximization model}'s solution.}
    \label{Fig:Gas_and_LDES_SDES}
\end{figure*}

For each considered amount of 100-hour LDES power capacity, a different mix of renewable energy investment is required, as shown in Fig. \ref{Fig:Additional_investment}. The candidate options in the {\it opportunity value maximization model} are solar, wind-ons (onshore), wind-ofs (offshore), and SDES (4h duration) assets. From these options, wind-ofs is never chosen by the model, and SDES investment is only required for LDES power capacities lower than 14GW. In general, as LDES power capacity increases, the amount of additional installed capacity from other technologies decreases, with the exception of LDES power capacities ranging between 15 and 20 GW. Within this specific range, LDES provides enough support for the system to choose to invest primarily in solar, which is the less expensive renewable technology in this case study but it is not available at all hours of the day. Furthermore, we observe a saturation in the contribution of LDES to decrease renewable investments when its power capacity reaches 60 and 70 GW. In this case, an amount of 13,710 MW (mix of solar and wind-ons) of newly included renewables is the minimum requirement to retire gas units and use LDES as a firm energy source.

Fig. \ref{Fig:Cost_Reduction} depicts the the reduction in overall system costs as we increase the LDES power capacity once gas units are retired. Essentially, without LDES, the retirement of gas power plants would require substantial investments in SDES and renewable generators. These investments are significantly reduced as we increase the LDES power capacity present in the system. For instance, when the LDES power capacity is 8.7 GW, there is an overall {\color{black}annual system cost reduction} of 7.67 million dollars compared to the costs obtained from the {\it baseline model}. This reduction justifies an investment cost of {\color{black}US\$13.74 $\text{kW}^{-1}$} for LDES. As we increase the LDES power capacity to 17 GW, the overall {\color{black}annual system cost reduction} reaches 558.93 million dollars which can allow the investment cost of LDES to be {\color{black}US\$512.54 $\text{kW}^{-1}$}.

\begin{figure}[!tb]
    \centering
    \includegraphics[width=.45\textwidth,height=0.40
     \textheight,keepaspectratio]{./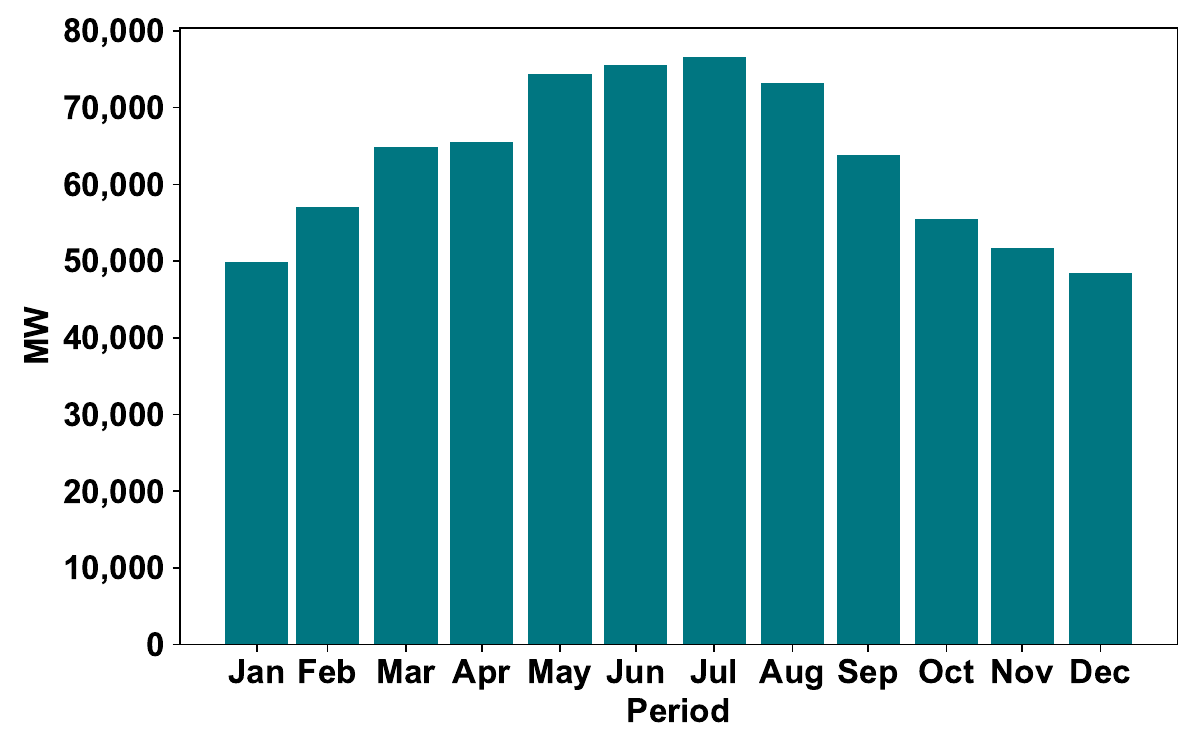}
    \caption{
    {\color{black}Average hourly availability per month of all installed renewable sources in California per month in 2050 according to the solution of the {\it opportunity value maximization model} when the 100-hour LDES power capacity is 9 GW.}
    }
    \vspace{-0.2cm}
    \label{Fig:Availability_Renewable}
\end{figure}

\begin{figure}[!tb]
    \centering
    \includegraphics[width=.4\textwidth,height=0.35\textheight,keepaspectratio]{./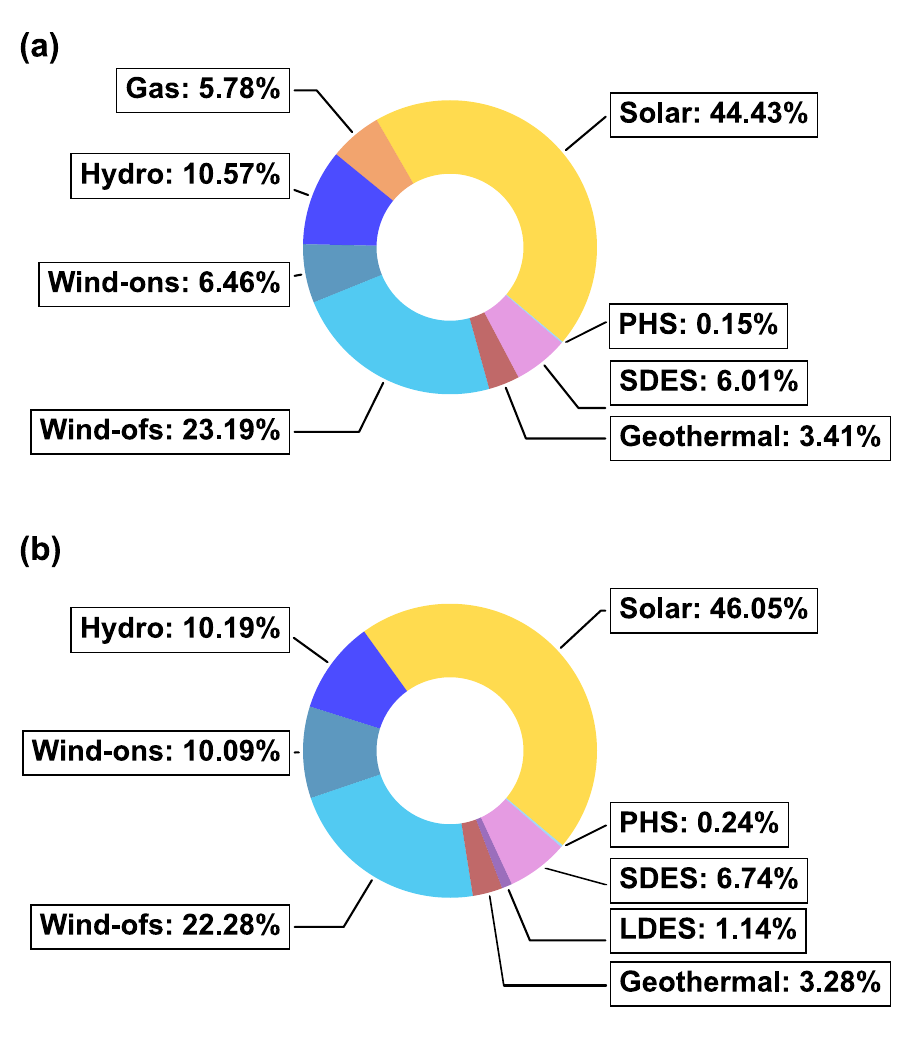}
    \caption{Annual generation contribution per technology for California in 2050 -- (a) results of the {\it baseline model} and (b) results of the {\it opportunity value maximization model} {\color{black}when the 100-hour LDES power capacity is 70 GW}.}
    \vspace{-0.4cm}
    \label{Fig:Generation}
\end{figure}

\begin{figure}[!tb]
    \centering
    \includegraphics[width=.4\textwidth,height=0.35
     \textheight,keepaspectratio]{./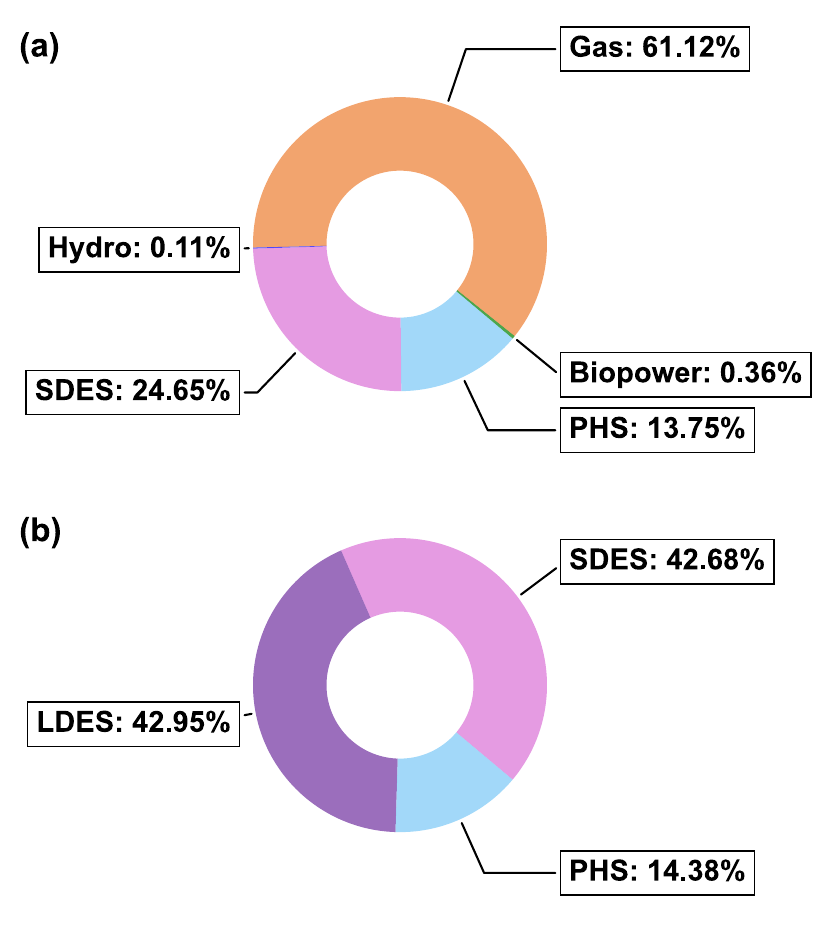}
    \caption{Annual reserve provision per technology for California in 2050 -- (a) results of the {\it baseline model} and (b) results of the {\it opportunity value maximization model} {\color{black}when the 100-hour LDES power capacity is 70 GW}.}
    \vspace{-0.2cm}
    \label{Fig:Reserves}
\end{figure}

\subsection{Operational results}

To justify the previously mentioned boundary costs, the LDES provides the system with an instrumental operational contribution. For example, Fig. \ref{Fig:Gas_and_LDES_SDES} (a) depicts the aggregate dispatch of gas power plants during a year according to the {\it baseline model} whereas Figs. \ref{Fig:Gas_and_LDES_SDES} (b) and \ref{Fig:Gas_and_LDES_SDES} (c) display the state of charge of LDES and SDES systems, respectively, when the 100-hour LDES power capacity is 9 GW. In addition, Fig. \ref{Fig:Availability_Renewable} shows the {\color{black}the average hourly renewable availability per month in 2050 in California once all the investments indicated by the {\it opportunity value maximization model} take place}. As can be seen in Fig. \ref{Fig:Availability_Renewable}, renewables in California, in general, can provide significantly higher levels of generation output during spring and summer compared to fall and winter. In this context, on the one hand, the participation of SDES systems is very important to balance intraday fluctuations of renewables as indicated by the high frequency in the change of their aggregate state of charge in Fig. \ref{Fig:Gas_and_LDES_SDES} (c). On the other hand, SDES systems are not able to help the system to cope with seasonal changes in renewable generation patterns. In the reference system tested through the {\it baseline model}, gas units provide the firm generation needed especially during September when there is a sharp decline in renewable generation output. In the absence of gas plants, LDES takes advantage of the excess of renewable availability during spring and summer to fully charge so as to contribute as a firm resource afterward.

Finally, the overall annual contribution in terms of generation output and reserve provision from each technology is illustrated in Figs. \ref{Fig:Generation} and \ref{Fig:Reserves}, respectively, according to the results of (a) the {\it baseline model} and (b) the {\it opportunity value maximization model}. In both figures, the participation of LDES and SDES as well as the increased renewable generation compensate the retirement of gas 

\vspace{-0.3cm}
\section{Conclusion}\label{sec:Conclusions}
\vspace{-0.1cm}
In this paper, we proposed a systematic manner to estimate the boundary costs of LDES and provided a realistic case study on the role and costs of LDES to fully decarbonize the California's energy system in 2050. The proposed methodology consists of a baseline model and an opportunity value maximization model, which are solved sequentially to assess the maximum reduction in overall costs achievable through a specified quantity of LDES in terms of energy and power. Our results demonstrated that full decarbonization in California in 2050 can be cost-effectively achieved if at least 8.7 GW power capacity of a 100-h LDES is present. At this quantity, if the investment cost of LDES is {\color{black}US\$13.74 $\text{kW}^{-1}$}, the overall annual system costs of the fully decarbonized system will be the same as the reference system, which still relies on gas units to provide firm generation. Furthermore, we observed that the boundary cost of LDES raises when the power capacity ranges from 8.7 GW to 17 GW. This increase is due to the higher rate of reduction in system costs per additional kW of LDES power capacity within this interval. Once the LDES power capacity is greater than 17 GW, the rate of reduction declines and the boundary cost of LDES follows the same pattern.

\bibliographystyle{IEEEtran}
\bibliography{IEEEabrv,References}

\end{document}